\renewcommand{\oddsidemargin}{5mm}
\numberwithin{equation}{section}
\newtheorem{theorem}{Theorem}[section]
\newtheorem{lemma}[theorem]{Lemma}
\newtheorem{proposition}[theorem]{Proposition}
\newtheorem{remark}{Remark}
\allowdisplaybreaks \numberwithin{remark}{section}
\newcommand{\Real}{\mathbb{R}}
\newcommand{\IntK}{\mathring{K}}
\def\ot{\overline{\theta}}
\def\Kmz{\dot{K}}
\def\pK{\partial K}
\def\cLX{\mathcal{L}(X)}
\begin{document}
\title{A Generalized Krein-Rutman Theorem }

 \author{Lei Zhang\\
 {\small \it Department of Mathematics, University of Science and Technology of China,}\\
{\small \it Hefei, Anhui 230026, China}\\
{\small\it  Department of Mathematics and Statistics, Memorial University of Newfoundland,}\\
{\small \it  St. John's, NL AIC5S7, Canada} \\
{\small Email: \tt mathzhl@mail.ustc.edu.cn}
}

\date{}
\maketitle

\begin{abstract}
A generalized Krein-Rutman theorem for a strongly positive bounded linear operator whose spectral radius is larger than essential spectral radius is established: the spectral radius of the operator is an algebraically simple eigenvalue with strongly positive eigenvector and other eigenvalues are less than the spectral radius.
\end{abstract}

\section{Introduction}
\noindent 

Krein-Rutman theorem is a fundamental theorem in positive compact linear operator theory. It has been widely applied to Partial Differential Equations, Dynamical systems, Markov Process, Fixed Point Theory, and Functional Analysis. For instance, Krein-Rutman theorem is a basic tool to derive the existence of principal eigenvalue of a second order elliptic equation, which can be used to dynamic behaviors analysis of the corresponding system.  

In the pioneering works of Perron \cite{perron1907theorie} and Frobenius  \cite{frobenius1908ueber,frobenius1912matrizen}, it was proved that the spectral radius of a nonnegative square matrix is an eigenvalue with a nonnegative eigenvector. Krein and Rutman developed Perron and Frobenius's theory to a positive compact linear operator, which is the celebrated Krein-Rutman theorem. Nussbaum\cite{nussbaum1981eigenvectors} showed a more generalized Krein-Rutman theorem for a positive bounded linear operator whose spectral radius is larger than essential spectral radius.

Furthermore, Perron-Frobenius theorem also show that if the square matrix is nonnegative and irreducible, then the spectral radius is an algebraically simple eigenvalue with strongly positive eigenvector and other eigenvalues are less than the spectral radius. Krein-Rutman theorem also present a similar conclusion for a strongly positive compact linear operator(See, e.g., \cite[Theorem 1.2]{du2006order} or \cite[Theorem 19.3]{deimling1985nonlinear}). While for a strongly positive bounded linear operator whose spectral radius is larger than essential spectral radius, Nussbaum didn't show the similar conclusion in \cite{nussbaum1981eigenvectors} or other works. In this paper, we focus on proving this by using Krein-Rutman theorem proving thought ( \cite[Theorem 1.2]{du2006order}) and the observation about relationship of the essential spectral radius in different spaces.

To describe more exactly, now we recall some basic notations. Let $Y$  be a  Banach space. $P\subset Y$ is called a cone if $P$ is convex closed set, $\lambda P\subset P$ for $\lambda \geq 0$ and $P\cap (-P)=\emptyset$. Let $\mathring{P}$ be the interior of $P$, $\partial P=P\setminus \mathring{P}$ the boundary of $P$ and $ \dot{P}=P\setminus \{0\}$. Recall that $a \geq b$ if $a-b \in P$, $a>b$ if $a-b \in \dot{P}$, and $a\gg b$ if $a-b \in \mathring{P}$.  $P$ is called total if $Y=\overline{P-P}$. Obviously, if $\mathring{P}\neq \emptyset$, then $P$ is total.  Let $\mathcal{L}(Y)$ be the collection of all bounded linear operators from $Y$ to $Y$. We recall $T\in \mathcal{L}(Y)$ is positive operator if $T:P \rightarrow P$, and $T\in \mathcal{L}(Y)$ is strongly positive operator if $T:\dot{P} \rightarrow \mathring{P}$. Let $\sigma(T)$, $\sigma_e(T)$ denote the spectrum and essential spectrum of $T \in \cLX$, respectively, whose radius are denoted by $r(T)$ and $r_e(T)$. The following is the Krein-Rutman theorem(See, e.g., \cite[Theorem 1.2]{du2006order} or
\cite[Theorem 19.3]{deimling1985nonlinear}). 
\begin{theorem}[Krein-Rutman Theorem]\label{lem:KR}  
Let $X$ be a Banach space, a total cone $K \subset X$, and $L\in \cLX$ a compact positive operator with $r(L)>0$, then $r(L)$ is an eigenvalue with eigenvector $x \in \dot{K}$. If further, the interior $\IntK\neq \emptyset$ and $L$ is a strongly positive, then $x \in \mathring{K}$, $r(L)$ is an algebraically simple eigenvalue of $L$,  and $\vert\lambda \vert <r(L)$ for any other eigenvalue $\lambda$ of $L$.
\end{theorem}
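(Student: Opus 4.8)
The plan is to run the classical two-part argument (as in \cite[Theorem 1.2]{du2006order} and \cite[Theorem 19.3]{deimling1985nonlinear}): first extract a positive eigenvector for $r(L)$ from positivity and compactness alone, then bring in strong positivity, together with a strictly positive eigenfunctional of the adjoint, to get simplicity and strict dominance. Write $r:=r(L)>0$. The starting point is the standard fact that a positive bounded operator on a Banach space ordered by a total cone has $r\in\sigma(L)$ (a monotone-resolvent / vector-valued Pringsheim argument). Since $L$ is compact and $r\neq 0$, Riesz--Schauder theory then makes $r$ a pole, of some order $m\geq 1$, of the resolvent $R_\lambda:=(\lambda-L)^{-1}$; let $B:=\lim_{\lambda\downarrow r}(\lambda-r)^{m}R_\lambda$ be its leading Laurent coefficient. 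Then $B\neq 0$; for $\lambda>r$ the Neumann series gives $R_\lambda\geq 0$, hence $B\geq 0$; and since $K$ is total a nonzero positive operator cannot vanish on all of $K$, so there is $x_0\in K$ with $u:=Bx_0\in\dot K$. Matching the $(\lambda-r)^{-m}$-coefficients in $(\lambda-L)R_\lambda=I$ gives $(L-r)B=0$, so $Lu=ru$; this proves the first assertion.

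Now assume $\mathring{K}\neq\emptyset$ and $L$ strongly positive. Since $u\in\dot K$, $u=r^{-1}Lu\in\mathring{K}$, i.e. $u\gg 0$. For geometric simplicity, given a real eigenvector $v$ for $r$, put $t^{*}:=\sup\{t\geq 0:\ u-tv\in K\ \text{and}\ u+tv\in K\}$; this is finite (otherwise $\pm v\in K$, impossible for $v\neq 0$) and positive (as $u\gg 0$), and by maximality one of $u\pm t^{*}v$, say $z:=u-t^{*}v$, lies in $\partial K$. If $z\neq 0$ then $z\in\dot K$, so $Lz=rz\in\mathring{K}$ forces $z\in\mathring{K}$, contradicting $z\in\partial K$; hence $z=0$ and $v\in\mathbb{R}u$. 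For algebraic simplicity I would first rerun the previous paragraph for $L^{*}$, which is compact and positive for the dual cone $K^{*}=\{\phi\in X^{*}:\phi|_{K}\geq 0\}$ and has $r\in\sigma(L^{*})$, to obtain $\varphi\in K^{*}\setminus\{0\}$ with $L^{*}\varphi=r\varphi$ (here the needed witness in $K^{*}$ comes from separating $-u$ from $K$, so totality of $K^{*}$ is not required); strong positivity of $L$ then upgrades this to $\varphi(x)>0$ for every $x\in\dot K$ (because $Lx\in\mathring{K}$, any nonzero element of $K^{*}$ is strictly positive on $\mathring{K}$, and $r\varphi(x)=\varphi(Lx)$). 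If $r$ were not algebraically simple there would be a real $w$ with $(L-r)w=u$, and pairing with $\varphi$ gives $0=\varphi(Lw)-r\varphi(w)=\varphi(u)>0$, a contradiction.

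It remains to exclude an eigenvalue $\lambda$ with $|\lambda|=r$ and $\lambda\neq r$. Suppose $\lambda=re^{i\theta}$ with $e^{i\theta}\neq 1$ and eigenvector $v\neq 0$. Set $g(\alpha):=\mathrm{Re}(e^{i\alpha}v)$; since the complexification of $L$ commutes with $\mathrm{Re}$ we get $Lg(\alpha)=\mathrm{Re}(e^{i\alpha}\lambda v)=rg(\alpha+\theta)$. Let $\tau:=\min\{t:\ tu-g(\alpha)\in K\ \forall\alpha\in\mathbb{R}\}$, which is well defined and positive because $\{g(\alpha)\}$ is norm-bounded and $u\gg 0$. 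With $x_\alpha:=\tau u-g(\alpha)\in K$, pairing with $\varphi$ gives $\varphi(x_\alpha)=\tau\varphi(u)-\mathrm{Re}(e^{i\alpha}\varphi(v))\geq\tau\varphi(u)-|\varphi(v)|\geq 0$. If this last inequality is strict, then $\varphi(x_\alpha)>0$ for all $\alpha$, so every $x_\alpha\in\dot K$ and hence $x_{\alpha+\theta}=r^{-1}Lx_\alpha\in\mathring{K}$ for all $\alpha$; then $\inf_\alpha\mathrm{dist}(x_\alpha,X\setminus\mathring{K})>0$ by continuity and $2\pi$-periodicity, so a slightly smaller value of $\tau$ still works, contradicting minimality. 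If instead $\tau\varphi(u)=|\varphi(v)|\,(>0)$, choose $\alpha^{*}$ with $\mathrm{Re}(e^{i\alpha^{*}}\varphi(v))=|\varphi(v)|$; then $\varphi(x_{\alpha^{*}})=0$ with $x_{\alpha^{*}}\in K$, so $x_{\alpha^{*}}=0$ and $g(\alpha^{*})=\tau u\gg 0$; iterating $Lg(\alpha)=rg(\alpha+\theta)$ gives $g(\alpha^{*}+n\theta)=\tau u$ for all $n\geq 0$, so $N\tau u=\sum_{n=0}^{N-1}g(\alpha^{*}+n\theta)=\mathrm{Re}\bigl(e^{i\alpha^{*}}v\,\tfrac{1-e^{iN\theta}}{1-e^{i\theta}}\bigr)$ has norm bounded uniformly in $N$, forcing $u=0$, a contradiction. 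Hence every eigenvalue $\lambda\neq r$ satisfies $|\lambda|<r$.

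The step I expect to be the real obstacle is this last one: one must convert the ``rotation by $\theta$'' encoded in $Lg(\alpha)=rg(\alpha+\theta)$ into a single contradiction valid uniformly for all $\theta$ with $e^{i\theta}\neq 1$ --- rational and irrational multiples of $2\pi$ alike --- which is what the bounded-partial-sums estimate above is designed to do. Secondary care is needed in the adjoint step (checking positivity of $L^{*}$ and producing a strictly positive eigenfunctional without assuming $K^{*}$ is total) and in the imported lemma that $r(L)\in\sigma(L)$ for positive operators on spaces with a total cone.
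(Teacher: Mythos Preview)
The paper does not give its own proof of Theorem~\ref{lem:KR}: this is quoted as the classical Krein--Rutman theorem with a reference to \cite[Theorem 1.2]{du2006order} and \cite[Theorem 19.3]{deimling1985nonlinear}, and is used only as background for the paper's actual target, Theorem~\ref{thm:GKR:strong}. So strictly speaking there is nothing in the paper to compare your proof against.

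That said, your plan is a correct classical argument, and it is worth noting where it \emph{diverges} from the machinery the paper builds for its generalized version. For algebraic simplicity you pass to the adjoint, produce a strictly positive eigenfunctional $\varphi$ with $L^{*}\varphi=r\varphi$, and pair it against a putative generalized eigenvector. The paper instead proves algebraic simplicity purely on the primal side (Lemma~\ref{lem:simple}): from $(rI-L)v=t_0x$ with $t_0>0$ it uses two threshold arguments via Lemma~\ref{lem:str_pos:thre} to force $t_0=0$, never invoking $L^{*}$. For the peripheral spectrum you run a rotation argument with $g(\alpha)=\mathrm{Re}(e^{i\alpha}v)$, a minimax constant $\tau$, and again the functional $\varphi$ to split into the two cases. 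The paper's route is again dual-free and more geometric: it works in the two-dimensional invariant subspace $X_1=\mathrm{span}\{u,v\}$, shows $K\cap X_1=\{0\}$ (Claim~1), proves the planar set $\Sigma=\{(\xi,\eta):x+\xi u+\eta v\in K\}$ is compact (Claim~2), and then applies $L$ once at a point where $\xi^2+\eta^2$ is maximal to derive $|\lambda|\le r(L)-\alpha$. Your approach is closer to the Deimling treatment and has the advantage that the dual functional handles simplicity and the peripheral case in a unified way; the paper's approach avoids the adjoint entirely, which is what lets it transplant cleanly to the non-compact setting of Theorem~\ref{thm:GKR:strong} where one only assumes $r(L)>r_e(L)$.
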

The generalized Krein-Rutman theorem(weak version) is from \cite[Corollay 2.2]{nussbaum1981eigenvectors}.
\begin{theorem}[Generalized Krein-Rutman Theorem, a weak version]\label{thm:GKR:weak} Let $X$ be a Banach space, a total cone $K \subset X$ and $L\in\cLX$ a positive operator with $r(L)>r_e(L)$, then there exists a $x \in \dot{K}$ such that $Lx=r(L)x$.
\end{theorem}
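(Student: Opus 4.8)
The plan is to read off the eigenvector from the resolvent $R(\lambda):=(\lambda I-L)^{-1}$ near $\lambda=r(L)$. Since $r(L)>r_e(L)\ge0$ we have $r(L)>0$. For real $\lambda>r(L)$ the resolvent equals the operator-norm convergent series $R(\lambda)=\sum_{n\ge0}\lambda^{-n-1}L^{n}$; because $\lambda>0$, $L$ is positive and $K$ is closed, every partial sum maps $K$ into $K$, hence so does $R(\lambda)$. Thus $R(\lambda)$ is a positive operator for each real $\lambda>r(L)$, and this — together with the totality of $K$ — is the only cone structure the argument uses.

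The argument hinges on two facts about the location of $r(L)$. \emph{(I) $r(L)\in\sigma(L)$.} This is the classical Perron-type statement, provable by a Pringsheim-type argument: were $r(L)$ a regular point, $R$ would be holomorphic on a disc about it, so its Taylor expansion about a real base point $\lambda_{0}>r(L)$ — namely $R(\lambda)=\sum_{k\ge0}(\lambda_{0}-\lambda)^{k}S_{k}$, where $S_{k}=\sum_{n\ge0}\binom{n+k}{k}\lambda_{0}^{-n-1-k}L^{n}$ is a positive operator — would converge at some real $\lambda=r(L)-\delta<r(L)$; regrouping the resulting double series of positive terms (legitimate after pairing with a functional from the dual cone $K^{+}$) would push the convergence of $\sum_{n}\lambda^{-n-1}L^{n}$ past $r(L)$, contradicting the definition of the spectral radius. \emph{(II)} Since $r(L)>r_e(L)$, the point $r(L)$ lies in $\sigma(L)\setminus\sigma_e(L)$, so by Nussbaum's essential-spectrum theory \cite{nussbaum1981eigenvectors} it is an isolated point of $\sigma(L)$ that is an eigenvalue of finite algebraic multiplicity; in particular the Riesz projection at $r(L)$ has finite rank, so $r(L)$ is a pole of $R$ of some finite order $m\ge1$.

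Granting (I) and (II), expand $R(\lambda)=\sum_{n\ge -m}(\lambda-r(L))^{n}B_{n}$ on a punctured disc about $r(L)$, with $B_{-m}\neq0$. Letting $\lambda\downarrow r(L)$ along the reals gives $B_{-m}=\lim_{\lambda\downarrow r(L)}(\lambda-r(L))^{m}R(\lambda)$; since $(\lambda-r(L))^{m}>0$, $R(\lambda)$ is positive and $K$ is closed, the limit $B_{-m}$ is a positive operator. If $P$ is the Riesz projection of $L$ at $r(L)$, the structure of a pole of order $m$ gives $B_{-m}=\pm(r(L)I-L)^{m-1}P$, while $(r(L)I-L)P$ is nilpotent of index exactly $m$; hence $(r(L)I-L)B_{-m}=\pm(r(L)I-L)^{m}P=0$, so $B_{-m}$ maps $X$ into $\ker(r(L)I-L)$. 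Finally $B_{-m}\neq0$ is continuous and linear, so it cannot vanish identically on $K$ — otherwise it would vanish on $K-K$, hence by continuity on $\overline{K-K}=X$. Choosing $x_{0}\in K$ with $x:=B_{-m}x_{0}\neq0$, positivity of $B_{-m}$ gives $x\in K\setminus\{0\}=\dot{K}$, and $B_{-m}(X)\subseteq\ker(r(L)I-L)$ gives $Lx=r(L)x$. This $x$ is the desired eigenvector.

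The step I expect to be the real obstacle is (I), $r(L)\in\sigma(L)$. Because $L$ need not be compact, the classical Krein--Rutman limiting argument — extracting a convergent subsequence from the normalized vectors $R(\mu)u_{0}/\|R(\mu)u_{0}\|$ as $\mu\downarrow r(L)$, for some fixed $u_{0}\in\dot{K}$ — is unavailable, so one is forced onto the resolvent route and must establish $r(L)\in\sigma(L)$ directly; this is the one place positivity of $L$ genuinely enters, and making the Pringsheim rearrangement rigorous takes care, since $X$ carries only a total cone, with neither lattice structure nor a priori normality, so one argues throughout with the dual cone $K^{+}$, which is nonzero and separates the points of $X$ precisely because $K$ is a proper cone. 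Fact (II) is imported wholesale from Nussbaum's essential-spectrum machinery, and everything downstream of it — propagating positivity to the norm limit $B_{-m}$, the inclusion of the range of $B_{-m}$ in $\ker(r(L)I-L)$, and the totality argument producing $x_{0}$ — is routine Riesz-theoretic bookkeeping.
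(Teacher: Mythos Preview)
The paper does not supply a proof of this statement: Theorem~\ref{thm:GKR:weak} is simply quoted as \cite[Corollary~2.2]{nussbaum1981eigenvectors} and then used as a black box in the proof of Theorem~\ref{thm:GKR:strong}. So there is no in-paper argument to compare yours against.

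Your resolvent-based argument is nonetheless a sound and self-contained route to the result, and it is genuinely different from Nussbaum's original, which proceeds via the fixed-point index for nonlinear maps rather than through the Laurent expansion of $R(\lambda)$. Once (I) and (II) are in hand, your extraction of the eigenvector from the leading Laurent coefficient $B_{-m}=\lim_{\lambda\downarrow r(L)}(\lambda-r(L))^{m}R(\lambda)$ is entirely correct: positivity of $R(\lambda)$ for real $\lambda>r(L)$ and closedness of $K$ give $B_{-m}\ge 0$; the Riesz calculus gives $(r(L)I-L)B_{-m}=0$ and $B_{-m}\neq 0$; and totality of $K$ then forces $B_{-m}x_{0}\neq 0$ for some $x_{0}\in K$. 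Step (II) is exactly the content of the hypothesis $r(L)>r_e(L)$. Your diagnosis that (I) is the real obstacle is accurate, and the Pringsheim route you sketch is the standard one; the rearrangement must indeed be carried out weakly via $f\in K^{+}$ and $x\in K$ in the absence of normality, and one should say explicitly how the scalar bounds $\sum_{n}\lambda_{1}^{-n-1}f(L^{n}x)<\infty$ feed back to a contradiction with the spectral-radius formula --- this last step is where a sentence or two of extra care is warranted.
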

 In this paper, we focus on proving the following generalized Krein–Rutman Theorem(strong version).
\begin{theorem}[Generalized Krein-Rutman Theorem, a strong version]\label{thm:GKR:strong}  Let $X$ be a Banach space, a cone $K \subset X$ with $\IntK\neq \emptyset$, and $L\in \cLX$ a strongly positive operator with $r(L)>r_e(L)$, then $r(L)$ is an algebraically simple eigenvalue of $L$ with an eigenvector $x \in \IntK$, and $\vert\lambda \vert <r(L)$ for any other eigenvalue $\lambda$ of $L$.
\end{theorem}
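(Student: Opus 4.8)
The plan is to follow the standard argument for a strongly positive \emph{compact} operator (as in \cite[Theorem~1.2]{du2006order}), but to replace each appeal to the compact Krein--Rutman theorem by the weak generalized version, Theorem~\ref{thm:GKR:weak}, applied either to $L$ on $(X,K)$ or to the adjoint $L^{*}$ on $(X^{*},K^{*})$, where $K^{*}$ is the dual cone. Two standard facts about the essential spectral radius are used as black boxes: since $r(L)>r_e(L)\ge 0$ we have $r(L)>0$, and $r(L)$ is an isolated point of $\sigma(L)$ which is an eigenvalue of finite algebraic multiplicity; and $r_e(L^{*})=r_e(L)$ while $r(L^{*})=r(L)$. (It is this transfer of the essential spectral radius to the dual that the introduction calls the ``observation about the essential spectral radius in different spaces''.)

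First I would obtain the two ingredients of the classical proof. Theorem~\ref{thm:GKR:weak} applied to $L$ gives $u\in\Kmz$ with $Lu=r(L)u$, and then $u=r(L)^{-1}Lu\in\IntK$ by strong positivity; this is already the required interior eigenvector. For the adjoint, the dual cone $K^{*}=\{\phi\in X^{*}:\langle\phi,x\rangle\ge 0\ \text{for all }x\in K\}$ is a total cone in $X^{*}$ --- this is where $\IntK\neq\emptyset$ is used, as it makes $K^{*}$ generating in $X^{*}$ --- $L^{*}$ is $K^{*}$-positive, and $r(L^{*})=r(L)>r_e(L)=r_e(L^{*})$; hence Theorem~\ref{thm:GKR:weak} applied to $L^{*}$ yields a nonzero $\phi\in K^{*}$ with $L^{*}\phi=r(L)\phi$. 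Finally $\langle\phi,u\rangle>0$: if it were $0$, then from $u\in\IntK$ we would get $u\pm\varepsilon y\in K$ for all $y$ and small $\varepsilon$, hence $\langle\phi,y\rangle=0$ for all $y$, a contradiction.

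With a strongly positive eigenvector $u$ and a positive eigenfunctional $\phi$ with $\langle\phi,u\rangle>0$ in hand, the three assertions follow by cone-theoretic arguments that do not use compactness of $L$. (i) \emph{Geometric simplicity.} If $Lv=r(L)v$ with $v\neq 0$ we may, after taking real and imaginary parts, assume $v$ real; the closed interval $I=\{t\in\Real:u-tv\in K\}$ contains a neighbourhood of $0$ and is proper (otherwise $v,-v\in K$, so $v=0$), so after replacing $v$ by $-v$ if needed $t^{*}:=\sup I\in(0,\infty)$ and $w:=u-t^{*}v\in\pK$; since $Lw=r(L)w$, if $w\neq 0$ then strong positivity forces $w=r(L)^{-1}Lw\in\IntK$, a contradiction, so $w=0$ and $v$ is a multiple of $u$. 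Thus $\ker(r(L)-L)$ is one-dimensional. (ii) \emph{Algebraic simplicity.} If $(r(L)-L)^{2}v=0$ with $z:=(r(L)-L)v\neq 0$, then $z$ is an eigenvector for $r(L)$, so $z=cu$ with $c\neq 0$, and $\langle\phi,z\rangle=c\langle\phi,u\rangle\neq 0$; but $\langle\phi,z\rangle=\langle(r(L)-L^{*})\phi,v\rangle=0$, a contradiction, so the ascent of $r(L)-L$ at $r(L)$ is $1$. Combined with the fact that $r(L)$ is isolated, this gives that its generalized eigenspace equals $\ker(r(L)-L)$, which is one-dimensional; so $r(L)$ is algebraically simple. (iii) \emph{Dominance.} Suppose $Lv=\lambda v$ with $v=v_{1}+iv_{2}\neq 0$, $\lambda\neq r(L)$, and assume for contradiction $|\lambda|=r(L)$, say $\lambda=r(L)e^{i\theta}$. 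Pairing with $\phi$ gives $\langle\phi,v\rangle=0$, so $\langle\phi,v_{1}\rangle=\langle\phi,v_{2}\rangle=0$. Setting $w_{\psi}=\mathrm{Re}(e^{i\psi}v)$ we have $Lw_{\psi}=r(L)w_{\psi+\theta}$, the family $\{w_{\psi}:\psi\in\Real\}$ is compact and, since $v\neq 0$, not identically zero, and $\langle\phi,w_{\psi}\rangle=0$ for all $\psi$. Let $t^{*}=\sup\{t\ge 0:u-tw_{\psi}\in K\ \text{for all }\psi\}$; it is positive because $u\in\IntK$, it is finite (otherwise $-w_{\psi}\in K$ for all $\psi$ and, since $w_{\psi+\pi}=-w_{\psi}$, all $w_{\psi}=0$), and each $u-t^{*}w_{\psi}$ is nonzero (otherwise $\langle\phi,u\rangle=0$). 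Strong positivity then gives $L(u-t^{*}w_{\psi})=r(L)(u-t^{*}w_{\psi+\theta})\in\IntK$, hence $u-t^{*}w_{\eta}\in\IntK$ for \emph{every} $\eta$; as $\{u-t^{*}w_{\eta}:\eta\in\Real\}$ is a compact subset of the open set $\IntK$ and $\sup_{\psi}\|w_{\psi}\|<\infty$, one can replace $t^{*}$ by $t^{*}+\varepsilon'$ for a small $\varepsilon'>0$ and still have $u-tw_{\psi}\in K$ for all $\psi$, contradicting maximality. So $|\lambda|<r(L)$ for every eigenvalue $\lambda\neq r(L)$.

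The step I expect to be the main obstacle is the application of Theorem~\ref{thm:GKR:weak} to the adjoint: one must be sure that $K^{*}$ is a total cone in $X^{*}$ and, crucially, that $r_e(L^{*})=r_e(L)<r(L^{*})$, since this is the one point requiring input beyond elementary cone manipulation and Theorem~\ref{thm:GKR:weak}. Once the strictly positive eigenfunctional $\phi$ is available, (i) and (ii) are routine; among the cone arguments the dominance statement (iii) --- via the rotating family $w_{\psi}$ and the ``gap'' opened by strong positivity through the identity $Lw_{\psi}=r(L)w_{\psi+\theta}$ --- is the most delicate, though still elementary.
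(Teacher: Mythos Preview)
Your strategy departs from the paper's at exactly the point you flag as the obstacle, and that step does not go through as written. The claim that $\IntK\neq\emptyset$ makes $K^{*}$ generating in $X^{*}$ is false in general: by a classical result of Krein, $K^{*}-K^{*}=X^{*}$ holds if and only if $K$ is \emph{normal}, and nonempty interior does not force normality (take $X=C^{1}[0,1]$ with the cone of nonnegative functions). Without $K^{*}$ total you cannot invoke Theorem~\ref{thm:GKR:weak} for $L^{*}$, so the positive eigenfunctional $\phi$ is not available in the stated generality. Your reading of the ``observation about the essential spectral radius in different spaces'' is also off: the paper's observation is Lemma~\ref{lem:ess:L1<L}, comparing $r_{e}(L)$ with $r_{e}$ of the restriction of $L$ to an invariant subspace, not with $r_{e}(L^{*})$.

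The paper avoids the dual altogether. Algebraic simplicity is obtained by a direct order argument (Lemma~\ref{lem:simple}): if $(r(L)I-L)^{2}v=0$ then $(r(L)I-L)v=t_{0}x$, and two applications of Lemma~\ref{lem:str_pos:thre} together with strong positivity rule out $t_{0}\neq 0$ --- no $\phi$ is used. For the complex case of dominance the paper works inside the two-dimensional real invariant subspace $X_{1}=\mathrm{span}\{v_{1},v_{2}\}$; the role your $\phi$ plays (ensuring $u-t^{*}w_{\psi}\neq 0$) is taken by the claim $K\cap X_{1}=\{0\}$, which the paper proves by applying Theorem~\ref{thm:GKR:weak} to $L|_{X_{1}}$ via Lemma~\ref{lem:ess:L1<L} (in fact $\dim X_{1}=2$ already gives $r_{e}(L|_{X_{1}})=0$) and then contradicting the fact that $L|_{X_{1}}$ has only the non-real eigenvalues $\lambda,\bar\lambda$. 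After that, the paper's rotational argument with the set $\Sigma=\{(\xi,\eta):x+\xi v_{1}+\eta v_{2}\in K\}$ is essentially your $w_{\psi}$-argument in different bookkeeping. So your proofs of (i) and of (iii) are essentially correct and close to the paper's; only the detour through $L^{*}$ for (ii) and for the nonvanishing step in (iii) is problematic, and both uses of $\phi$ can be replaced by the purely primal arguments just described.
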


\section{Proof of a generalized Krein-Rutman theorem}


At first, we recall a formula to compute essential spectrum  (See e.g. \cite[Theorem 9.9]{deimling1985nonlinear}).
\begin{lemma}\label{lem:for:ess}
Let $X$ be a Banach space and $L\in \cLX$, then
$r_e(L)=\lim_{n\rightarrow +\infty}(\gamma(L^n))^\frac{1}{n}$, where 
\begin{equation}\label{equ:gamma:L}
\gamma(L):=\inf \{k>0: \alpha(L B) \leq k \alpha(B)\text{ for any bounded }B \subset X  \}.
\end{equation}
and $\alpha$ represent Kuratowski measure of non-compactness.
\end{lemma}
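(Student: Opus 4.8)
The plan is to set $\rho:=\lim_{n\to\infty}\gamma(L^n)^{1/n}$, show this limit exists, and then trap $r_e(L)$ between $\rho$ and $\rho$. Everything rests on three elementary properties of $\gamma$, which I would record first: $\gamma$ is submultiplicative, $\gamma(ST)\le\gamma(S)\gamma(T)$ (immediate from $\alpha(STB)\le\gamma(S)\alpha(TB)\le\gamma(S)\gamma(T)\alpha(B)$); it is positively homogeneous, $\gamma(tT)=|t|\gamma(T)$; and $\gamma(T)\le\|T\|$ with $\gamma(K)=0$ exactly when $K$ is compact. Submultiplicativity makes $n\mapsto\log\gamma(L^n)$ subadditive, so Fekete's lemma gives $\rho=\lim_n\gamma(L^n)^{1/n}=\inf_n\gamma(L^n)^{1/n}\in[0,\infty)$, disposing of the existence of the limit.

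For $\rho\le r_e(L)$ I would pass to the Calkin algebra $\mathcal{A}=\cLX/\mathcal{K}(X)$, where $\mathcal{K}(X)$ is the ideal of compact operators, with quotient map $\pi$ and norm $\|\pi(T)\|=\mathrm{dist}(T,\mathcal{K}(X))$. Subadditivity of $\gamma$ together with $\gamma(K)=0$ and $\gamma(S)\le\|S\|$ yields $\gamma(T)\le\|T-K\|$ for every compact $K$, hence $\gamma(T)\le\|\pi(T)\|$. Applying this to $L^n$, using the standard identification $\sigma_e(L)=\sigma_{\mathcal{A}}(\pi(L))$ and Gelfand's spectral radius formula in $\mathcal{A}$, I obtain $\rho=\lim_n\gamma(L^n)^{1/n}\le\lim_n\|\pi(L^n)\|^{1/n}=r(\pi(L))=r_e(L)$.

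The reverse inequality $r_e(L)\le\rho$ carries the real content and hinges on the Fredholm criterion I regard as the main obstacle: \emph{if $\gamma(T)<1$ then $I-T$ is a Fredholm operator of index zero}. Granting it, fix $\lambda$ with $|\lambda|>\rho$; since $\rho=\inf_n\gamma(L^n)^{1/n}$ there is an $n$ with $\gamma(L^n)<|\lambda|^n$, so $T:=\lambda^{-n}L^n$ has $\gamma(T)<1$ by homogeneity. The criterion makes $I-\lambda^{-n}L^n$ Fredholm, i.e. $\lambda^n\notin\sigma_e(L^n)$; by the polynomial spectral mapping theorem read off in $\mathcal{A}$, $\sigma_e(L^n)=\{\mu^n:\mu\in\sigma_e(L)\}$, so $\lambda\notin\sigma_e(L)$. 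As $|\lambda|>\rho$ was arbitrary, $r_e(L)\le\rho$, and the two inequalities give $r_e(L)=\rho$.

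It remains to prove the criterion, which I would do in three steps. First, $\ker(I-T)$ is finite-dimensional: on it $T$ acts as the identity, so its unit ball $B$ obeys $\alpha(B)=\alpha(TB)\le\gamma(T)\alpha(B)$, and $\gamma(T)<1$ forces $\alpha(B)=0$, whence $B$ is compact. Second, $\mathrm{Range}(I-T)$ is closed: taking a closed complement $X_0$ of the kernel, a normalized sequence $x_k\in X_0$ with $(I-T)x_k\to0$ would satisfy $\alpha(\{x_k\})\le\gamma(T)\alpha(\{x_k\})$ after absorbing the null sequence, forcing a convergent subsequence and a contradiction; hence $I-T$ is bounded below on $X_0$, so it is semi-Fredholm with finite-dimensional kernel and closed range. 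Third, the index vanishes: along the homotopy $t\mapsto I-tT$, $t\in[0,1]$, each operator is semi-Fredholm because $\gamma(tT)\le\gamma(T)<1$, so homotopy invariance of the index gives $\mathrm{ind}(I-T)=\mathrm{ind}(I)=0$; in particular the cokernel is finite-dimensional and $I-T$ is Fredholm of index zero. The delicate points are the closed-range estimate and a rigorous statement of homotopy invariance, both standard in the theory of $k$-set-contractions but deserving care.
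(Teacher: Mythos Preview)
The paper does not prove this lemma at all: it is stated as a known formula and attributed to \cite[Theorem~9.9]{deimling1985nonlinear}, with no argument given. So there is no ``paper's own proof'' to compare against; your proposal is a self-contained reconstruction of the standard Nussbaum-type argument that the reference supplies.

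Your outline is essentially correct. A couple of small points worth tightening. First, in the bound $\gamma(T)\le\|\pi(T)\|$ you invoke ``subadditivity of $\gamma$,'' but you listed only submultiplicativity, homogeneity, and $\gamma\le\|\cdot\|$ among your elementary properties; you should record $\gamma(S+T)\le\gamma(S)+\gamma(T)$ explicitly (it follows from $\alpha(SB+TB)\le\alpha(SB)+\alpha(TB)$). Second, in the index-zero step you need that the index is locally constant on the set of \emph{semi}-Fredholm operators, not just Fredholm ones, since a priori you have only shown that $I-tT$ has finite-dimensional kernel and closed range; this is true (Kato), but it is exactly the ``delicate point'' you flag and should be cited carefully. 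Finally, be aware that several inequivalent definitions of $\sigma_e$ circulate; your argument uses the Calkin-algebra (Wolf/Fredholm) spectrum, which is the one for which the formula is usually stated and is consistent with Deimling's usage, but it is worth saying so explicitly since the paper itself never fixes a definition.
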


\begin{lemma}\label{lem:ess:L1<L}
Let $X$ be a Banach space, $X_1$ a subspace of $X$, $L\in \cLX$ and $LX_1\subset X_1$. Then $r_e(L_1)\leq r_e(L)$ where $L_1=L\vert_{X_1}$.
\end{lemma}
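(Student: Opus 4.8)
The plan is to deduce the inequality $r_e(L_1)\le r_e(L)$ from the formula of Lemma~\ref{lem:for:ess}, by showing that the quantity $\gamma(\cdot)$ is monotone under restriction to an invariant subspace. Since $LX_1\subset X_1$ we have $L^nX_1\subset X_1$ and $L_1^n=(L^n)\vert_{X_1}$ for every $n\ge 1$, so it suffices to prove $\gamma(L_1)\le \gamma(L)$ and then apply this with $L^n$ in place of $L$; passing to $n$-th roots and letting $n\to\infty$ via Lemma~\ref{lem:for:ess} will give
\[
r_e(L_1)=\lim_{n\to\infty}\big(\gamma(L_1^n)\big)^{1/n}\le \lim_{n\to\infty}\big(\gamma(L^n)\big)^{1/n}=r_e(L).
\]

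The one point that needs care is the elementary fact that the Kuratowski measure of non-compactness of a bounded set $B\subset X_1$ is the same whether it is computed in $X_1$ or in the ambient space $X$; write $\alpha_{X_1}$ and $\alpha_X$ for the two. A finite cover of $B$ by subsets of $X_1$ of diameter $\le d$ is \emph{a fortiori} such a cover by subsets of $X$, so $\alpha_X(B)\le \alpha_{X_1}(B)$. Conversely, if $B\subset\bigcup_{i=1}^m S_i$ with $S_i\subset X$ and $\mathrm{diam}(S_i)\le d$, then $B=\bigcup_{i=1}^m(B\cap S_i)$, where each $B\cap S_i$ is a subset of $X_1$ of diameter $\le d$; hence $\alpha_{X_1}(B)\le\alpha_X(B)$. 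Therefore $\alpha_{X_1}(B)=\alpha_X(B)$ for every bounded $B\subset X_1$. (Here I use the standing assumption that $X_1$ is a closed subspace, so that $L_1\in\mathcal{L}(X_1)$ and $r_e(L_1)$ is meaningful.)

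Granting this, for any bounded $B\subset X_1$ one has $L_1B=LB\subset X_1$, so
\[
\alpha_{X_1}(L_1B)=\alpha_X(LB)\le \gamma(L)\,\alpha_X(B)=\gamma(L)\,\alpha_{X_1}(B).
\]
Taking the infimum over all admissible constants in the definition \eqref{equ:gamma:L} of $\gamma(L_1)$ (with bounded sets ranging over $X_1$) yields $\gamma(L_1)\le\gamma(L)$. Replacing $L$ by $L^n$—legitimate since $L^n\in\mathcal{L}(X)$, $L^nX_1\subset X_1$, and $(L^n)\vert_{X_1}=L_1^n$—gives $\gamma(L_1^n)\le\gamma(L^n)$ for all $n$, and the displayed limit comparison above completes the proof.

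I do not expect a genuine obstacle here: the argument is a direct application of the spectral-radius formula once the invariance $\alpha_{X_1}=\alpha_X$ on subsets of $X_1$ is in hand, which is the only (and minor) subtlety worth spelling out.
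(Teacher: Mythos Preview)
Your proposal is correct and follows essentially the same route as the paper: show $\gamma(L_1)\le\gamma(L)$ from the definition, iterate to $\gamma(L_1^n)\le\gamma(L^n)$, and conclude via Lemma~\ref{lem:for:ess}. The paper's proof is terser and leaves implicit the point you spell out, namely that $\alpha_{X_1}(B)=\alpha_X(B)$ for bounded $B\subset X_1$; your version is more careful on this (and on the closedness of $X_1$), but the argument is the same.
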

\begin{proof}
We set 
$$
A:=
\{k>0: \gamma(L B) \leq k \gamma(B)\text{ for any bounded }B \subset X  \},
$$
$$
A_1:=
\{k>0: \gamma(L_1 B_1) \leq k \gamma(B_1)\text{ for any bounded }B_1 \subset X_1  \}.
$$
Due to $L_1 B_1=L B_1$ for any bounded $B_1 \subset X_1 \subset X$, it implies that $k \in A_1$ for any $k \in A$. We deduce that $\gamma(L_1)\leq \gamma(L)$ from \eqref{equ:gamma:L}. By the same arguments, we have $\gamma(L_1^n)\leq \gamma(L^n)$ for any integer $n\geq 1$. It follows that $r_e(L_1) \leq r_e(L)$ from applying Lemma \ref{lem:for:ess}.
\end{proof}

\begin{lemma}\label{lem:pos:any_t}
Let $X$ be a Banach space, a cone $K \subset X$, and $x \in K$, if $x +t v \in K$ for any $t > 0$, then $v \in K$. 
\end{lemma}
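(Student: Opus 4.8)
The plan is to exploit the two defining features of a cone that have not yet been used in isolation: closure under multiplication by nonnegative scalars, and topological closedness. The hypothesis gives us a whole ray of points $x + tv$ lying in $K$ for every $t > 0$, and the goal is to extract $v$ itself as a limit of rescaled versions of these points. The natural scaling is to divide by $t$ and let $t \to +\infty$, which washes out the $x$-contribution and leaves $v$.

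Concretely, I would proceed as follows. Fix any $t > 0$. By hypothesis $x + tv \in K$, and since $1/t > 0$ and $K$ is a cone, multiplying by the nonnegative scalar $1/t$ keeps us in $K$:
\[
\frac{1}{t}\,(x + tv) = \frac{1}{t}\,x + v \in K.
\]
This holds for every $t > 0$, so the family $\{\tfrac{1}{t}x + v : t > 0\}$ is contained in $K$. Now I let $t \to +\infty$. In the norm of $X$ we have $\left\lVert \tfrac{1}{t}x \right\rVert = \tfrac{1}{t}\lVert x\rVert \to 0$, hence $\tfrac{1}{t}x + v \to v$ in $X$.

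Finally I would invoke the closedness of $K$: since each $\tfrac{1}{t}x + v$ lies in $K$ and these converge to $v$, the limit $v$ belongs to $K$ as well. This completes the argument.

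There is no serious obstacle here; the only thing to notice is the correct choice of scaling factor, namely dividing by $t$ (rather than, say, keeping $t$ fixed or scaling $v$ directly). I would emphasize that both cone axioms are genuinely needed: closure under nonnegative scalar multiplication licenses the rescaling step, and topological closedness licenses passing to the limit. Without closedness the conclusion would fail, so that hypothesis is the essential ingredient that makes the limiting point land back in $K$.
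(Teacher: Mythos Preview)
Your argument is correct and follows exactly the same route as the paper's proof: rescale $x+tv$ by $t^{-1}$ to get $t^{-1}x+v\in K$, then let $t\to+\infty$ and use the closedness of $K$ to conclude $v\in K$. The only difference is that you spell out the details more explicitly than the paper does.
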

\begin{proof}
By virtue of $t^{-1} x+v \in K$ for any $t> 0$, letting $t\rightarrow +\infty$, it follows that $v \in K$.
\end{proof}
\begin{lemma} \label{lem:str_pos:thre}
Let $X$ be a Banach space, a cone $K \subset X$ with $\IntK\neq \emptyset$. For $x \in \IntK$ and $v \notin K$, there is a finite number $t_0>0$ such that $x+t_0 v \in \pK$, $x+t v \in \IntK$ for $ 0\leq t < t_0$ and $x+t v \notin K$ for $  t> t_0$. 
\end{lemma}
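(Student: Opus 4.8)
\emph{Proof proposal.} The plan is to study the set
$$
S := \{\, t \ge 0 : x + t v \in K \,\}
$$
and to show that it is a nondegenerate, bounded, closed interval $[0,t_0]$; the three assertions of the lemma then fall out immediately. First I would record the structural properties of $S$. Since $x \in \IntK$ and $\IntK$ is open, $x + t v \in \IntK$ for all sufficiently small $t \ge 0$, so $[0,\varepsilon) \subset S$ for some $\varepsilon > 0$ and in particular $S \neq \{0\}$. Because $K$ is convex, if $t_1,t_2 \in S$ and $\lambda \in [0,1]$ then $x + (\lambda t_1 + (1-\lambda)t_2)v = \lambda(x + t_1 v) + (1-\lambda)(x + t_2 v) \in K$, so $S$ is an interval containing $0$. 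Since $K$ is closed and $t \mapsto x + t v$ is continuous, $S$ is closed. Finally $S$ is bounded: otherwise, being an unbounded interval containing $0$, we would have $S = [0,\infty)$, so $x + t v \in K$ for every $t > 0$, and Lemma \ref{lem:pos:any_t} would force $v \in K$, contradicting $v \notin K$. Hence $S = [0,t_0]$ with $0 < \varepsilon \le t_0 < \infty$.

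With $t_0$ in hand, the claim $x + t v \notin K$ for $t > t_0$ is merely the statement $t \notin S$. For $x + t_0 v \in \pK$: if instead $x + t_0 v \in \IntK$, then by openness of $\IntK$ we would get $x + (t_0 + \delta)v = (x + t_0 v) + \delta v \in K$ for all small $\delta > 0$, contradicting $t_0 = \max S$; hence $x + t_0 v \in K \setminus \IntK = \pK$. For $0 \le t < t_0$ I would write
$$
x + t v = \Bigl(1 - \tfrac{t}{t_0}\Bigr) x + \tfrac{t}{t_0}\,(x + t_0 v),
$$
a convex combination of the interior point $x \in \IntK$ and the point $x + t_0 v \in K$ in which the interior point carries strictly positive weight $1 - t/t_0 > 0$, and conclude $x + t v \in \IntK$ by the line‑segment principle for convex sets with nonempty interior.

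I expect the only step requiring genuine care is this last one: verifying that a proper convex combination of an interior point and another point of a convex set again lies in the interior. Since $K$ lives in a possibly infinite‑dimensional Banach space I would not cite a finite‑dimensional statement but instead spell out the one‑line argument: pick $r > 0$ with the open ball $B(x,r) \subset K$; then for $\lambda = t/t_0 \in [0,1)$, convexity gives $(1-\lambda)B(x,r) + \lambda(x + t_0 v) \subset K$, and this set is exactly the open ball of radius $(1-\lambda)r > 0$ centered at $x + t v$, so $x + t v \in \IntK$. Everything else — convexity, closedness and boundedness of $S$, and the maximality argument identifying $\pK$ — is routine, with Lemma \ref{lem:pos:any_t} supplying the boundedness of $S$.
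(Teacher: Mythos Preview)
Your proof is correct and follows essentially the same route as the paper: define $t_0=\sup\{t\ge0:x+tv\in K\}$, use Lemma~\ref{lem:pos:any_t} for finiteness, and use the convex combination $x+tv=\tfrac{t}{t_0}(x+t_0v)+\tfrac{t_0-t}{t_0}x$ for the interior claim. You are simply more explicit than the paper in justifying that $t_0$ is attained, that $x+t_0v\in\pK$, and that the line-segment principle holds in this infinite-dimensional setting.
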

\begin{proof}
Let $$t_0:=\sup \{t\geq 0: x+tv \in K\}.$$
It follows that that $t_0>0$ and $t_0<+\infty$ from $x \in \IntK$ and Lemma \ref{lem:pos:any_t}. By the definition of $t_0$, we deduce that $x+tv \notin K$ for any $t >t_0$.

And for any $t\in [0,t_0)$, we have $x+t v=\frac{t}{t_0}(x+t_0v) +\frac{t_0-t}{t_0}x \in \IntK$.
\end{proof}
\begin{lemma}\label{lem:eig:no_other>0}
Let $X$ be a Banach space, a cone $K \subset X$ with $\IntK\neq \emptyset$, $L\in \cLX$ a positive operator. If $r$ is an eigenvalue of $L$ corresponding an eigenvector $x \in \IntK$, then $L$ cannot have an eigenvalue $s>r$.
\end{lemma}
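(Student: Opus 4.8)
\noindent\textit{Proof proposal.} The plan is to argue by contradiction using the classical positivity trick from the proof of the Krein--Rutman theorem. Suppose $L$ has an eigenvalue $s>r$ with eigenvector $y\neq 0$, so $Ly=sy$. First I would record two elementary normalizations. Since $x\in\IntK\subset\dot{K}$ and $L$ is positive, $Lx=rx\in K$, which forces $r\geq 0$ (otherwise $rx\in -\dot{K}$ would not lie in $K$); hence $s>0$ as well. Moreover, since $y\neq 0$ and $K\cap(-K)=\{0\}$, at least one of $y,-y$ fails to belong to $K$, so after replacing $y$ by $-y$ if necessary — it is still an eigenvector for $s$ — we may assume $-y\notin K$.

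Next I would apply Lemma~\ref{lem:str_pos:thre} to the interior point $x$ and the vector $v=-y\notin K$: it yields a finite $t_0>0$ with $w:=x+t_0v=x-t_0y\in\pK$ and, crucially, $x-ty\notin K$ for every $t>t_0$. Now apply $L$ to $w$. On one hand, $w\in\pK\subset K$ and $L$ positive give $Lw\in K$; on the other hand $Lw=Lx-t_0Ly=rx-st_0y$.

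The contradiction then comes from comparing $Lw$ with the threshold $t_0$. If $r>0$, dividing by $r$ gives $\tfrac{1}{r}Lw=x-\tfrac{st_0}{r}y\in K$, but $\tfrac{st_0}{r}>t_0$ since $s>r>0$, contradicting that $x-ty\notin K$ for all $t>t_0$. If instead $r=0$, then $Lw=-st_0y\in K$ with $st_0>0$, which forces $-y\in K$, contradicting the normalization. In either case we reach a contradiction, so no eigenvalue $s>r$ can exist.

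I do not expect a serious obstacle here: the argument is short and essentially a one-step positivity comparison once Lemma~\ref{lem:str_pos:thre} is in hand. The only points that need a little care are (i) choosing the sign of $y$ so that Lemma~\ref{lem:str_pos:thre} is applicable, i.e.\ arranging $-y\notin K$, and (ii) noticing that the hypotheses do not exclude $r=0$, so the scaling step (which divides by $r$) must be supplemented by the trivial separate treatment of that case.
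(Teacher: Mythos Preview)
Your proof is correct and follows essentially the same route as the paper: normalize the sign of the eigenvector, use Lemma~\ref{lem:str_pos:thre} to land on $\partial K$ at a threshold parameter $t_0$, then apply $L$ and exploit positivity to contradict the threshold property since $s/r>1$. You are in fact slightly more careful than the paper, which writes $L(x+t_0v)=r\bigl(x+t_0\tfrac{s}{r}v\bigr)$ and so tacitly assumes $r>0$; your separate handling of the case $r=0$ closes that small gap.
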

\begin{proof}
We prove this lemma by indirect argument. Suppose that there exists $s>r$ an eigenvalue of $L$ corresponding an eigenvector $v \notin K$ (otherwise change $v$ to $-v$). Due to $x \in \IntK$, we can find a $t_0>0$ such that $x+t_0 v \in \pK$ and $x+t v \notin K$ for $t > t_0$  from applying Lemma \ref{lem:str_pos:thre}. Then $$L(x+t_0 v)=r x +t_0 s v=r(x+t_0 \frac{s}{r}v) \in K.$$
Therefore $s \leq r$, a contradiction with $s > r$.
\end{proof}
\begin{remark}
If $x\in K$ but $x \notin \IntK$, the conclusion of Lemma \ref{lem:eig:no_other>0} maybe not right. For example, $1$ is an eigenvalue of matrix
$\left(\begin{array}{cc}
2 & 0 \\ 
0 & 1
\end{array}\right)$ corresponding a positive eigenvector 
$\left(\begin{array}{c}
0  \\ 
1 
\end{array}\right)$, while $2>1$ is also an eigenvalue of this matrix.
\end{remark}
\begin{remark}
There exists a positive operator rather than strongly positive operator satisfying the condition of Lemma \ref{lem:eig:no_other>0}, for instance,
$\left(\begin{array}{cc}
2 & 0 \\ 
1 & 1
\end{array}\right) $.
\end{remark}
\begin{lemma} \label{lem:simple}
Let $X$ be a Banach space, a cone $K \subset X$ with $\IntK\neq \emptyset$, and $L\in \cLX$ a strongly positive operator.
If $r>0$ is an eigenvalue of $L$ corresponding an eigenvector $x \in \Kmz$, then $x \in \IntK$ and $r$ is an algebraically simple eigenvalue.
\end{lemma}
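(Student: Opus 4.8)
The plan is to prove the two assertions in turn, following the classical Krein--Rutman scheme (as in \cite[Theorem 1.2]{du2006order}) but avoiding any appeal to a positive eigenfunctional of the adjoint $L^*$, since here $L$ need not be compact; the device replacing that step will be a direct iteration of $L$ together with a limiting argument inside the closed cone. For the two simplicity claims I may assume $X$ is real: if $X$ is complex then, $r$ being real, splitting (generalized) eigenvectors into real and imaginary parts shows the complex algebraic eigenspace at $r$ is the complexification of the real one, so it suffices to treat the real case. The claim $x\in\IntK$ is then immediate: since $x\in\Kmz$ and $L$ is strongly positive, $Lx\in\IntK$; as $r>0$, multiplication by $r^{-1}$ is a linear homeomorphism of $X$ carrying $K$ onto $K$, hence $\IntK$ onto $\IntK$, so $x=r^{-1}Lx\in\IntK$.

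For geometric simplicity (that $\ker(L-rI)=\Real x$) I would argue by contradiction: suppose $y$ is an eigenvector of $L$ for $r$ not proportional to $x$. If both $y\in K$ and $-y\in K$ then $y\in K\cap(-K)$, forcing $y=0$, a contradiction; so, replacing $y$ by $-y$ if necessary, we may take $y\notin K$. Applying Lemma~\ref{lem:str_pos:thre} to $x\in\IntK$ and $v=y$ yields $t_0>0$ with $z:=x+t_0y\in\pK$, and $z\neq 0$ since otherwise $x=-t_0y$ would be proportional to $y$. But $Lz=L(x+t_0y)=rx+rt_0y=rz$ and $z\in\Kmz$, so strong positivity gives $Lz\in\IntK$, i.e.\ $rz\in\IntK$, hence $z\in\IntK$, contradicting $z\in\pK$. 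Therefore $\ker(L-rI)=\Real x$.

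For algebraic simplicity it suffices to show $\ker(L-rI)^2=\ker(L-rI)$, since then inductively $\ker(L-rI)^k=\ker(L-rI)=\Real x$ for all $k\geq1$, so the algebraic eigenspace $\bigcup_{k\geq1}\ker(L-rI)^k$ is one-dimensional. Suppose, for contradiction, that some $w$ satisfies $(L-rI)^2w=0$ but $(L-rI)w\neq0$. By the previous step $(L-rI)w\in\Real x\setminus\{0\}$, so after rescaling $w$ we may assume $Lw=rw+x$. Since $x\in\IntK$, there is $\varepsilon>0$ with $x-\varepsilon w\in K$; put $u:=\varepsilon w$, so $Lu=ru+\varepsilon x$ and $x-u\in K$. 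A one-line induction gives $L^nu=r^nu+nr^{n-1}\varepsilon x$ for all $n\geq1$, whence
\[
L^n(x-u)=r^n(x-u)-nr^{n-1}\varepsilon x\in K
\]
by positivity of $L$. Dividing by $r^{n-1}>0$ gives $r(x-u)-n\varepsilon x\in K$ for every $n\geq1$; dividing by $n$ and letting $n\to\infty$, closedness of $K$ forces $-\varepsilon x\in K$. Since also $\varepsilon x\in K$ and $\varepsilon x\neq0$, both $\varepsilon x$ and $-\varepsilon x$ lie in $K$, which is impossible. Hence $\ker(L-rI)^2=\ker(L-rI)$, and $r$ is algebraically simple.

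The steps giving $x\in\IntK$ and geometric simplicity are routine once Lemmas~\ref{lem:pos:any_t} and~\ref{lem:str_pos:thre} are available; the one substantive point is the algebraic simplicity argument. In the compact setting one would choose a nonzero $\phi$ in the dual cone with $L^*\phi=r\phi$ and $\phi(x)>0$ and deduce $(L-rI)w=0$ from $\phi\big((L-rI)w\big)=0$; with no information about $L^*$ that route is closed, and the substitute is the explicit identity $L^n(x-u)=r^n(x-u)-nr^{n-1}\varepsilon x$ followed by the passage to the limit inside $K$ --- which works precisely because strong positivity has already forced $x\in\IntK$, making $x-u\in K$ available, and because $K$ is closed. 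A minor bookkeeping point is to confirm that "algebraically simple" is meant as one-dimensionality of the algebraic eigenspace; in the intended application $r=r(L)>r_e(L)$ is isolated in $\sigma(L)$ with finite algebraic multiplicity, so this coincides with the rank-one spectral-projection formulation.
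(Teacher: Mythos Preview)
Your proof is correct. The arguments for $x\in\IntK$ and for geometric simplicity are essentially the same as the paper's (both pick $t_0$ via Lemma~\ref{lem:str_pos:thre}, note $x+t_0y\in\pK\setminus\{0\}$, and use strong positivity to push it into $\IntK$).

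For algebraic simplicity you take a genuinely different route. The paper, having normalized to $rv-Lv=t_0x$ with $t_0>0$, applies the threshold Lemma~\ref{lem:str_pos:thre} twice: first to force $v\in K$ (hence $v\in\IntK$), then to $v-t_1x\in\pK$, and in each case reads off a contradiction from the order inequality $r(\,\cdot\,)\geq t_0x\gg0$. You instead normalize to $Lw=rw+x$, use $x\in\IntK$ to get $x-\varepsilon w\in K$, iterate to the explicit identity $L^n(x-\varepsilon w)=r^n(x-\varepsilon w)-nr^{n-1}\varepsilon x$, and pass to the limit inside the closed cone to obtain $-\varepsilon x\in K$. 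Both arguments are short and, after the first step has placed $x$ in $\IntK$, use only positivity of $L$ together with $x\gg0$; the paper's version stays within the ``threshold'' framework used throughout, while your iteration-and-limit device is closer in spirit to the Gel'fand-type growth arguments and avoids any further appeal to Lemma~\ref{lem:str_pos:thre}.
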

\begin{proof}
It follows that $x=r^{-1}Lx \in \IntK$ from $L$ is strongly positive. 

We first show that $r$ is geometrically simple. Assume there is $v\neq 0$ and $v\notin K$ (otherwise change $v$ to $-v$) such that $L v = r v$. 

We claim that $x + t v \notin \pK$ unless $x+ tv =0$. 
If the claim is not right, then there is $t_0\neq 0$ such that $x + t_0 v \in \pK$ and $x + t_0 v \neq 0$. Since $L$ is strongly positive and $L(x+ t v)=r(x+ t v) \text{ for all  } t \in \Real$, it implies that $x+ t_0 v=r^{-1}L(x+ t_0 v) \in \IntK$. This contradiction proves the claim.

 By Lemma \ref{lem:str_pos:thre}, there exists $t_1> 0$ such that $x+t_1 v \in \pK$ as $v \notin K$. We have $x+t_1 v=0$ from applying the above claim. This proves $r$ is a geometrically simple eigenvalue of $L$.	

Next, we want to show that $r$ is algebraically simple.
Let $(rI -L)^2 v=0$, it follows that $rv-Lv=t_0 x$ for some $t_0$ as $r$ is a geometrically simple eigenvalue. It is necessary to show that $t_0 =0$. Assume $t_0> 0$(otherwise change $v$ to $-v$). We are going to show that $v \in K$. Otherwise, there is a $s_0>0$ such that $x+s_0v\in \pK$ by Lemma \ref{lem:str_pos:thre}. But from  $L(x+s_0v)=r(x+s_0v)-s_0t_0x \ll r(x+s_0v)$, we conclude $x+s_0v \in \IntK$. This contradiction shows that $v \in K$. Therefore $v=r^{-1}(Lv+t_0 x) \in \IntK$.
Lemma \ref{lem:str_pos:thre} implies that there exists $t_1>0$ such that $v-t_1x \in \pK$ as $-x \notin K$. But from 
$rv-t_0x-t_1rx=L(v-t_1x)\geq 0$, we deduce $r(v-t_1x)\geq t_0x\gg0$, contradicting $v-t_1x \in \pK$. Hence, we must have $t_0=0$ and $rv-Lv=0$. This proves that $r$ is an algebraically simple eigenvalue of $L$.
\end{proof}
\begin{proof}[Proof of the Theorem \ref{thm:GKR:strong}]
By Theorem \ref{thm:GKR:weak} and Lemma \ref{lem:simple}, it implies that $r(L)$ is an algebraically simple eigenvalue of $L$ with an eigenvector $ x \in \IntK$. Let $\lambda \neq r(L)$ be an eigenvalue of $L$ with eigenvector $w$, we want to prove $\vert \lambda \vert <r(L)$.

If $\lambda>0$, it is a straightforward result of Lemma \ref{lem:eig:no_other>0}.

If $\lambda<0$, then from $L^2 x=r(L)^2x$, $L^2 w= \lambda^2 w$ and the above argument(applied to $L^2$), we deduce $\vert \lambda\vert ^2 <r(L)^2$  and hence $\vert\lambda \vert < r(L) $.

Now we consider $\lambda= \sigma +i \tau$ with $\tau \neq 0$ and suppose $\vert \lambda \vert =r(L)$. Then necessarily  $w=u +i v$ and 
\begin{equation}\label{equ:uv}
Lu = \sigma u - \tau v,\qquad 
Lv=\tau u+\sigma v
\end{equation}
We observe that $u$, $v$ are linearly independent for otherwise we have $\tau=0$. Let $X_1= span\{u,v\}$. Then \eqref{equ:uv} implies that $X_1$ is an invariant subspace of $L$. 
\\
Claim 1: $K_1:=K \cap X_1 =\{0\}.$

If the claim is not right, then $K_1$ is a positive cone in $X_1$ with nonempty interior, as for any $w \in \dot{K_1} $, $Lw \in X_1 \cap \IntK=\mathring{K_1}$. Now we denote $L_1:=L\vert_{X_1}$. It follows that $r(L_1)\geq r(L)$ from $\vert\lambda\vert =r(L)$ and $\lambda$ is an eigenvalue of $L_1$. In view of the  Lemma \ref{lem:ess:L1<L}, we conclude $r_e(L_1)\leq r_e(L)$. Hence $r(L_1)\geq r(L)>r_e(L)\geq r_e(L_1)$. By applying Theorem \ref{thm:GKR:weak} again to $L_1$ on $X_1$, there exists $w_0 \in \dot{K_1}$ such that $L_1w_0=r(L_1)w_0$, which implies that $r(L_1)=r(L)$ by Lemma \ref{lem:eig:no_other>0}. Since $r(L)$ is an algebraically simple eigenvalue of $L$, then $w_0 \in span \{x\}$. In other words, $x=\alpha u+ \beta v$ for some real number $\alpha$ and $\beta$. But one can use \eqref{equ:uv} and $Lx=r(L)x$ to derive $\alpha=\beta=0$, a contradiction. Therefore $K_1=\{0\}$.

Claim 2: the set
$\Sigma:= \{(\xi,\eta)\in \Real^2:x+\xi u +\eta v \in K\} $
is bounded and closed.

For any $(\xi,\eta)\in \Real^2$ with $\xi^2+\eta^2\neq 0$, there is a unique $R>0$ and $\theta \in [0,2\pi)$ such that $(\xi,\eta)=R(\cos\theta,\sin\theta)$. We denote $w(\theta)=u\cos\theta +v\sin \theta$ for any $\theta\in \Real$. $x \in \IntK$ derives that $\Sigma= \{R(\cos\theta,\sin \theta)\in \Real^2:x+ R w(\theta) \in K,R\geq 0, \theta\in\Real\}$. We conclude that $w(\theta)\notin K$ for any $\theta\in\Real$ by Claim 1.
By Lemma \ref{lem:str_pos:thre}, it follows for any $\theta \in \Real$, there is $R_0(\theta)>0$ such that $x+R_0(\theta)w(\theta)\in \pK$, $x+Rw(\theta) \in \IntK$ for any $R\in [0,R_0(\theta))$ and $x+Rw(\theta) \notin K$ for any $R>R_0(\theta)$. 
Hence, to prove the $\Sigma$ is bounded,
it is enough to prove that $R_0(\theta)$ is bounded on $\Real$.
 
 Now, we are going to show that $R_0(\theta)$ is continuous on $\Real$. Suppose $R_0(\theta)$ is not upper semi-continuous at some point $\ot \in \Real$, that is, there is a sequence $\theta_n \rightarrow \overline{\theta}$ as $n \rightarrow +\infty$ and $\epsilon_0>0$, such that  $R_0(\theta_n)\geq R_0(\overline{\theta})+\epsilon_0$ for $n$ large enough. Since $w(\theta)$ is continuous on $\Real$ and $ x+R w(\theta_n) \in K$ for any $R\in [0,R_0(\theta_n)]$ and  $n\geq 1$, it implies that $x+(R_0(\ot)+\epsilon_0) w(\ot) \in K$, which is a contradiction with $x+Rw(\ot) \notin K$ for any $R>R_0(\ot)$. Therefore $R_0(\theta)$ is upper semi-continuous on $\Real$. By the similar arguments, $R_0(\theta)$ is lower semi-continuous on $\Real$. Hence $R_0(\theta)$ is continuous.
Since $w(\theta)=w(\theta+2\pi)$ for any $\theta \in \Real$, we derive  $R_0(\theta)=R_0(\theta+2\pi)$ for any $\theta \in \Real$. Therefore $R_0(\theta)$ is bounded on $\Real$ and  $\Sigma$ is bounded.
It follows that $\Sigma$ is closed from $K$ is closed. The claim is proved.

Claim 2 implies that $M:=\sup \{\xi^2+\eta^2:(\xi,\eta)\in \Sigma\}>0$ and $M$ is achieved at some point $(\xi_0,\eta_0)\in \Sigma$. Let $z_0=x+\xi_0 u+ \eta_0 v$, then $z_0 \in \Kmz$ as $K_1=\{0\}$. Hence, $Lz_0 \in \IntK$ and there is $\alpha >0$ small enough such that $L z_0 \geq \alpha x$, that is,
\begin{equation}\label{equ:xi1&eta1}
(r(L)-\alpha)x + \xi_1 u+ \eta_1 v \geq 0,
\end{equation}
where $\xi_1=\xi_0\sigma+ \eta_0 \tau, \eta_1=\eta_0 \sigma-\xi_0 \tau$. Clearly $\xi_1^2 +\eta_1^2=(\sigma^2+\tau^2)(\xi_0^2+\eta_0^2)=M\vert\lambda 
\vert^2$.
By \eqref{equ:xi1&eta1}, we find that $(r(L)-\alpha)^{-1}(\xi_1,\eta_1)\in \Sigma$. Hence $\xi_1^2+\eta_1^2\leq M(r(L)-\alpha)^2$. We deduce that $\vert \lambda \vert ^2 \leq (r(L)-\alpha)^2$, which is a contradiction with $\vert \lambda \vert =r(L)$. Hence, we proved $\vert \lambda \vert <r(L)$.
\end{proof}
\begin{proposition}
Let $r(L)$ be an eigenvalue of $L$ with eigenvector $x \in \IntK$. For $y \in \Kmz$, we consider the following equation: 
\begin{equation}\label{equ:thre}
(\lambda I-L)z=y,
\end{equation}
(i)   ~~If $\lambda>r(L)$, the equation \eqref{equ:thre} has a unique solution $z\in \IntK$.  \\
(ii)  ~If $\lambda \leq r(L)$, the equation \eqref{equ:thre}  has no solution in $K$ .\\
(iii) If $\lambda = r(L)$, the equation \eqref{equ:thre}  has no solution.
\end{proposition}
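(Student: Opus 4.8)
The plan is to dispatch the three items in turn, using only that $L$ is strongly positive and that $r:=r(L)$ is an eigenvalue with eigenvector $x\in\IntK$ — which already forces $r>0$, since $Lx=rx\in\IntK$. For \textbf{(i)}: as $\lambda>r=\sup\{|\mu|:\mu\in\sigma(L)\}$, the operator $\lambda I-L$ is invertible, so \eqref{equ:thre} has the unique solution $z=(\lambda I-L)^{-1}y=\sum_{n\ge0}\lambda^{-n-1}L^{n}y$, the Neumann series converging in $\cLX$ because $r(\lambda^{-1}L)<1$; every partial sum lies in $K$ (as $L$ is positive and $y\in K$), so $z\in K$ because $K$ is closed. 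I would then bootstrap: writing $z=\lambda^{-1}(y+Lz)$ with $y\in\dot K$ and $Lz\in K$ gives $z\neq0$, hence $z\in\dot K$, hence $Lz\in\IntK$ by strong positivity, hence $z=\lambda^{-1}(y+Lz)\in\lambda^{-1}(K+\IntK)\subset\IntK$.

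For \textbf{(ii)}: given a solution $z\in K$ of $(\lambda I-L)z=y$ with $\lambda\le r$, from $\lambda z=Lz+y$ (with $Lz,y\in K$, $y\neq0$) I would first extract $\lambda>0$, $z\neq0$, and then $z\in\IntK$ (since $\lambda z=Lz+y\in\IntK+K$). Next I would set $t_0:=\inf\{t\ge0:tz-x\in K\}$; rescaling Lemma~\ref{lem:str_pos:thre} (apply it to $s\mapsto z-sx$ and substitute $t=1/s$) gives $0<t_0<\infty$, $t_0z-x\in\pK$ and $tz-x\notin K$ for $t<t_0$. Moreover $t_0z-x\neq0$, for otherwise $z=x/t_0$ is an eigenvector and $y=(\lambda-r)z$ would contradict $0\neq y\in K$ since $\lambda\le r$. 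Hence $t_0z-x\in\dot K$, so $L(t_0z-x)\in\IntK$, and as $Lz=\lambda z-y$,
\begin{equation*}
t_0\lambda z-rx=L(t_0z-x)+t_0y\in\IntK+K\subset\IntK .
\end{equation*}
Dividing by $r>0$ gives $\tfrac{t_0\lambda}{r}z-x\in\IntK\subset K$; the membership in $K$ forces $\tfrac{t_0\lambda}{r}\ge t_0$, hence $\lambda\ge r$, hence $\lambda=r$, and then $\tfrac{t_0\lambda}{r}z-x=t_0z-x\in\IntK$ contradicts $t_0z-x\in\pK$. So \eqref{equ:thre} has no solution in $K$.

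For \textbf{(iii)}: let $\lambda=r$ and let $z$ be any solution of $(rI-L)z=y$; since $y\neq0$, $z\neq0$. If $z\in K$, (ii) is contradicted. If $z\notin K$, Lemma~\ref{lem:str_pos:thre} with centre $x\in\IntK$ and direction $z$ gives $s_0>0$ with $x+s_0z\in\pK$; moreover $x+s_0z\neq0$, since $x+s_0z=0$ would force $z=-s_0^{-1}x$, whence $Lz=rz$ and $y=(rI-L)z=0$. Thus $x+s_0z\in\dot K$, so $L(x+s_0z)\in\IntK$, and as $Lz=rz-y$,
\begin{equation*}
r(x+s_0z)=L(x+s_0z)+s_0y\in\IntK+K\subset\IntK ,
\end{equation*}
so $x+s_0z\in\IntK$, contradicting $x+s_0z\in\pK$. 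Hence \eqref{equ:thre} has no solution at all.

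The only step I expect to require care is \textbf{(iii)}: the solution $z$ carries no sign a priori, so one must split on whether $z\in K$ and, in the threshold step, explicitly exclude the degenerate possibility that the boundary point $x+s_0z$ (or $t_0z-x$ in (ii)) equals $0$ — which is precisely where $y\neq0$ and the eigenequation $Lx=rx$ are used. Everything else reduces to the Krein--Rutman bookkeeping already carried out in Lemmas~\ref{lem:pos:any_t}--\ref{lem:simple}.
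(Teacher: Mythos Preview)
Your argument is correct in all three parts. Parts (i) and (ii) match the paper's approach: your Neumann-series justification for (i) is simply a spelled-out version of the paper's one-line ``$z=(\lambda I-L)^{-1}y\in\IntK$'', and your threshold argument in (ii) is the same as the paper's, only reparametrised (the paper works with $z-t_0x\in\pK$ rather than $t_0z-x\in\pK$, which differ by a positive scalar). Your explicit exclusion of the degenerate case $t_0z-x=0$ is a nice touch the paper glosses over.

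Part (iii) is where you and the paper genuinely diverge. You run a fresh boundary-threshold argument: push $x+s_0z$ to $\pK$, rule out the zero case, and derive $x+s_0z\in\IntK$ from strong positivity. The paper instead observes that since $x\in\IntK$, one can pick $t_1>0$ small with $x+t_1z\in\IntK$; then $(r(L)I-L)(x+t_1z)=t_1y\in\Kmz$ exhibits $x+t_1z\in K$ as a solution in $K$, which is forbidden by (ii). This is a one-line reduction to (ii) that avoids repeating the threshold machinery and the degenerate-case check. Your route is more self-contained and makes the mechanism visible a second time; the paper's route is more economical and highlights that (iii) is really a corollary of (ii) once one remembers that the eigenvector $x$ can absorb any sign of $z$.
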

\begin{proof}
It follows  that $z\neq 0$ from $ (\lambda I- L)z=y$.

$(i)$
If $\lambda>r(L)$, then $z=(\lambda I- L)^{-1}y\in \IntK$.

$(ii)$ Suppose $z \in K$. If $\lambda\leq 0$, then $y=(\lambda I-L)z\leq -Lz$, a contradiction.
Now we assume $0 <\lambda \leq r(L)$. It follows that $z=\lambda^{-1}(y +Lz) \in \IntK$. Lemma \ref{lem:str_pos:thre} implies that there exists $t_0$ such that $z-t_0 x \in \pK$ as $-x\notin K$. By $L(z-t_0x)=\lambda z -y -t_0 r(L)x$, we deduce $z-t_0x=r(L)^{-1}(L(z-t_0 x)+y+(r(L)-\lambda) z) \in \IntK$, which  contradicts with $z-t_0 x \in \pK$.

$(iii)$
If $ (r(L)I- L)z=y$, there exists $t_1>0$ small enough such that $x +t_1 z\in \IntK$ as $x\in \IntK$. Due to $(r(L)I- L)x=0$, it implies that $ (r(L)I- L)(x+t_1 z)=t_1 y \in K$, which contradicts with $(ii)$.
\end{proof}
\section*{Acknowledgements}We would like to thank Prof. Xiao-Qiang Zhao and Prof. Xing Liang for their helpful discussions and advice during the preparation of this work. We would also like thank Prof. Xue-Feng Wang for his short courses about  Krein-Rutman theorem.
\bibliographystyle{siamplain}

\end{document}